\theoremstyle{plain}
\newtheorem{theorem}{Theorem}[section]
\newtheorem{lemma}[theorem]{Lemma}
\numberwithin{equation}{section}
\theoremstyle{plain}
\theoremstyle{remark}
\def\bbR{{\mathbb {R}}}
\begin{document}

\date{July, 2013}

\title
{Application of a Fourier restriction theorem to 
certain 
families of projections in $\bbR^3$
}

\author[]
{Daniel Oberlin and Richard Oberlin}

\address
{Daniel  Oberlin \\
Department of Mathematics \\ Florida State University \\
 Tallahassee, FL 32306}
\email{oberlin@math.fsu.edu}

\address
{Richard Oberlin \\
Department of Mathematics \\ Florida State University \\
 Tallahassee, FL 32306}
\email{roberlin@math.fsu.edu}

\subjclass{42B10, 28E99}
\keywords{Hausdorff dimension, Fourier restriction, projections}

\thanks{D.O. was supported in part by NSF Grant DMS-1160680
and R.O. was supported in part by NSF Grant DMS-1068523.}

\begin{abstract}
We use a restriction theorem for Fourier transforms of fractal measures to study projections onto families of planes in $\bbR^3$ whose normal directions form nondegenerate curves. 
\end{abstract}

\maketitle

\section{Introduction and statement of results}

Suppose that $\gamma:[0,1]\rightarrow S^2$ is $C^{(2)}$. Following 
K. F\"assler and T. Orponen \cite{FO} we say that $\gamma$ is {\it nondegenerate} if  
\begin{equation*}\label{pd}
\text{span}\,\{\gamma (t),\gamma ' (t), \gamma '' (t)\}=\bbR^3 ,\ t\in [0,1].
\end{equation*}
Let $\pi_t$ be the orthogonal projection of $\bbR^3$ onto the plane $\gamma (t )^\perp$
and let $B\subset\bbR^3$ be a compact set with Hausdorff dimension $\dim(B)=\alpha$.
One of the problems treated in \cite{FO} is to say something about the 
dimension of $\pi_t (B)$ for generic $t\in[0,1]$.  F\"assler and Orponen prove that 

(a) if $\alpha \le 1$ then
%
$$
\dim (\pi_t (B))=\dim (B)
$$

for almost all $t\in [0,1]$, and

(b) if $\alpha >1$ then there exists $\sigma =\sigma (\alpha )>1$ such that the packing 

dimension of $\pi_t (B)$ exceeds $\sigma$ for almost all $t$. 

\noindent In a subsequent paper, \cite{O}, Orponen considers the particular $\gamma$ given by
\begin{equation}\label{p-1}
\gamma (t)=\frac{1}{\sqrt 2}(\cos t ,\sin t ,1)
\end{equation}
and establishes the analog of (b) for Hausdorff dimension. 
(We mention that, in addition to other interesting results of a similar nature, the papers \cite{FO} and \cite{O} provide a nice account of the history of these problems.) 
The purpose of this note is to prove the following result:

\begin{theorem}\label{projection}
%
%
With notation as above, 
suppose that $B$ is a compact subset 
of $\bbR^3$ and that $\dim (B)=\alpha \ge 1$. Then, for almost all $t\in[0,1]$,  
\begin{multline}\label{p0}
\ \ \ \ \dim \big(\pi_t (B)\big)\ge 3\alpha /4 \text{ if }1\le \alpha \le 2 \text{ and }\\
\dim \big(\pi_t (B)\big)\ge \alpha -1/2 \text{ if }2\le \alpha\le 3.\ \ \ \ 
\end{multline}
\end{theorem}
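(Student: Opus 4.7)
\medskip
\noindent\textit{Proof sketch.} The plan is to combine the standard Fourier characterisation of Hausdorff dimension via Riesz energies with a Fourier restriction estimate for Frostman measures, adapted to the one-parameter family of planes $\{\gamma(t)^\perp\}$. By Frostman's lemma, for each small $\epsilon>0$ we may choose a compactly supported probability measure $\mu$ on $B$ with $\mu(B(x,r))\lesssim r^{\alpha-\epsilon}$; it then suffices to show, for any $s$ strictly less than the lower bound in (\ref{p0}), that $(\pi_t)_*\mu$ has finite Riesz $s$-energy for almost every $t\in[0,1]$. Since $\widehat{(\pi_t)_*\mu}(\xi)=\hat\mu(\xi)$ whenever $\xi\in\gamma(t)^\perp$, Fubini reduces the task to a single estimate
\[
\int_0^1\!\int_{\gamma(t)^\perp}|\hat\mu(\xi)|^2\,|\xi|^{s-2}\,d\mathcal{H}^2(\xi)\,dt<\infty.
\]

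Next I would change variables from $(t,\xi)$ to $\xi\in\bbR^3$: for generic $\omega\in S^2$ the equation $\omega\cdot\gamma(t)=0$ has finitely many transverse solutions with Jacobian $|\omega\cdot\gamma'(t)|$, so passing to polar coordinates $\xi=r\omega$ rewrites the left-hand side as
\[
\int_0^\infty r^{s-1}\Bigl(\int_{S^2}|\hat\mu(r\omega)|^2\,w(\omega)\,d\sigma(\omega)\Bigr)\,dr,
\]
where $w(\omega):=\sum_{t:\,\omega\cdot\gamma(t)=0}|\omega\cdot\gamma'(t)|^{-1}$ is smooth away from the dual curve $\omega_0(t):=\gamma(t)\times\gamma'(t)/|\gamma(t)\times\gamma'(t)|$ and has an integrable square-root singularity along it; nondegeneracy of $\gamma$ forces $\omega_0(\cdot)$ itself to be a smooth nondegenerate curve on $S^2$ (since $\det[\gamma,\gamma',\gamma'']\ne0$ makes $\omega_0,\omega_0',\omega_0''$ span $\bbR^3$).

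The dyadic piece $r\sim R$ is then handled by splitting $S^2$ into a fixed neighbourhood $N$ of $\omega_0(\cdot)$ and its complement. On $S^2\setminus N$ the weight $w$ is bounded and a Mattila-type spherical $L^2$ estimate for the Fourier transform of a Frostman measure applies; on $N$ one appeals to the Fourier restriction theorem advertised in the title, namely an $L^2$ restriction estimate for Frostman measures to the nondegenerate curve $R\cdot\omega_0(\cdot)\subset\bbR^3$, after absorbing the $1/\sqrt{\mathrm{dist}}$ weight by integrating transversally to the curve. Each piece contributes a decay rate $R^{-\beta_i(\alpha)}$; balancing these against $r^{s-1}$ together with summability over dyadic $R$ should produce the critical exponent, with the break point between ``sphere dominates'' and ``curve dominates'' at $\alpha=2$, yielding the two regimes $s=3\alpha/4$ on $[1,2]$ and $s=\alpha-1/2$ on $[2,3]$.

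The main obstacle will be the near-curve estimate: a single plane $\gamma(t_0)^\perp$ is flat and admits no useful Fourier restriction inequality in isolation, so the decay near the envelope of the family must be extracted by exploiting the averaging in $t$ together with the torsion supplied by nondegeneracy. Producing a restriction inequality for the curve $\omega_0(\cdot)$ with constants that cleanly absorb the square-root weight, and confirming that the resulting exponents match (\ref{p0}) at the endpoints $\alpha=1,2,3$, is the decisive point to verify.
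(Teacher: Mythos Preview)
Your overall strategy is essentially that of the paper: reduce to an energy integral over $(t,\xi)$ with $\xi\in\gamma(t)^\perp$, identify the dual curve $\omega_0(t)=\gamma(t)\times\gamma'(t)$ as the locus where the change-of-variables Jacobian degenerates, handle directions away from $\omega_0$ via the finiteness of the $\bbR^3$ energy $\int|\hat\mu(\xi)|^2|\xi|^{s-3}\,d\xi$ (your ``Mattila-type spherical $L^2$ estimate'' amounts to exactly this once the bounded weight is absorbed), and handle directions near $\omega_0$ via the restriction estimate of Theorem~\ref{restriction}. You also correctly observe that nondegeneracy of $\gamma$ forces $\omega_0$ itself to be nondegenerate; the paper quotes this as Lemma~3.2 of \cite{FO}.

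The gap is the splitting threshold. With a \emph{fixed} neighbourhood $N$ of $\omega_0$ the two pieces do not balance to the claimed exponents. On $N$, applying Theorem~\ref{restriction} uniformly to curves parallel to $\omega_0$ and then integrating the weight $d^{-1/2}$ over $d\in(0,d_0)$ yields, at dyadic scale $R$, a contribution of order $R^{\,s-\beta(\alpha')}$; summability then forces $s<\beta(\alpha')=\alpha'/2$ (resp.\ $\alpha'-1$), strictly worse than $3\alpha'/4$ (resp.\ $\alpha'-1/2$). The paper instead parametrizes each plane $\gamma(t)^\perp$ by a polar angle $\theta$ measured from $v(t)=\omega_0(t)$ and splits at an $R$-\emph{dependent} threshold $\eta(R)=R^{-\alpha'/4}$ (resp.\ $R^{-1/2}$): the near piece contributes $\eta(R)\,R^{\tau-\beta(\alpha')}$, while the far piece, after using the Jacobian bound $|\sin\theta|\ge\eta(R)$ to pass to the three-dimensional energy integral, contributes $\eta(R)^{-1}R^{\tau-\tilde\alpha}$; equating the two determines $\eta(R)$ and yields exactly $\tau(\alpha')=3\alpha'/4$ (resp.\ $\alpha'-1/2$). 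In your formulation this means the width of $N$ must shrink like a negative power of $R$. Note finally that the restriction input is needed not only along $\omega_0$ but uniformly for every nearby perturbed curve $v_\theta(t)=\cos\theta\,v(t)+\sin\theta\,\gamma'(t)$, which is precisely why Theorem~\ref{restriction} is stated with constants depending only on the derivative bounds $m,M$.
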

\noindent The proof uses the potential-theoretic method introduced in \cite{K}, which we approach using 
the Fourier transform as in \cite{F}. In the model case \eqref{p-1} a critical role 
in the proof of Theorem \ref{projection} 
is played by the 
following result of Erdo\u{g}an, which can be extracted from \cite{E}:

\begin{theorem}\label{BE}
Suppose that $\mu$ is a nonnegative and compactly supported Borel probability measure on $\bbR^3$ satisfying 
\begin{equation}\label{r2}
\mu \big( B(x,r)\big)\le c\, r^\alpha
\end{equation}
for $x\in\bbR^3$ and $r>0$. If $\alpha '<\alpha$ then there is $C$ 
(depending only on $c, \alpha'$, and the diameter of the support of $\mu$)
such that
\begin{equation*}
\int_0^{2\pi} \int_{1/2}^1|\hat \mu \big( R\,\rho\, (\cos t ,\sin t ,1)\big)|^2 d\rho \,dt\le 
C\,R^{-\beta (\alpha ' )}, R\ge 1, 
\end{equation*}
where 
$\beta (\alpha ' )=\alpha ' /2$ if $1\le \alpha \le 2$ and
$\beta (\alpha ' )=\alpha ' -1$ if $2\le \alpha \le 3$.
\end{theorem}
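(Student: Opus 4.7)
The plan is to interpret the left-hand side as an $L^2$ integral of $|\hat\mu|^2$ against surface measure on a truncated piece of the light cone, and then apply a bilinear Fourier extension theorem for the cone. Parameterizing $\xi=R\rho(\cos t,\sin t,1)$, the induced surface element on the cone $\{x_1^2+x_2^2=x_3^2\}$ is $d\sigma=\sqrt{2}\,R^2\rho\,d\rho\,dt$, so the double integral in Theorem \ref{BE} is, up to constants, $R^{-2}\int_{\Gamma_R}|\hat\mu(\xi)|^2\,d\sigma(\xi)$, where $\Gamma_R$ denotes the annular piece of the cone with height in $[R/2,R]$. The task thus reduces to establishing $\int_{\Gamma_R}|\hat\mu|^2\,d\sigma\lesssim R^{2-\beta(\alpha')}$.

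To attack this I would invoke the bilinear $L^2$ extension theorem for the light cone in $\bbR^3$ due to Wolff. Writing $\int_{\Gamma_R}|\hat\mu|^2\,d\sigma=\langle E^\ast\mu,\,E^\ast\mu\rangle_{L^2(\sigma)}$ with $E$ the extension operator from $\Gamma_R$, I would perform a Whitney decomposition of $\Gamma_R\times\Gamma_R$ off the diagonal into pairs of bilinearly transverse caps $(\sigma_1,\sigma_2)$. On each transverse pair, Wolff's bound $\|E(g_1\,d\sigma_1)\,E(g_2\,d\sigma_2)\|_{L^p(\bbR^3)}\lesssim R^{\epsilon}\|g_1\|_2\|g_2\|_2$ applies for an appropriate $p$; one then converts the $L^p(dx)$ output into an $L^2(d\mu)$ estimate by using the Frostman hypothesis \eqref{r2}, which provides the power saving in $R$.

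The two regimes in the conclusion should arise by interpolating two endpoint bounds. A direct Plancherel/ball-condition argument yields $\beta=\alpha'/2$, which dominates for $1\le\alpha\le 2$. Once $\alpha>2$, the bilinear cone estimate becomes effective and produces the stronger $\beta=\alpha'-1$; the two lines meet continuously at $(\alpha,\beta)=(2,1)$.

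The main obstacle is tracking the exponents carefully through the Whitney decomposition and the $L^p(dx)\to L^2(d\mu)$ transfer in order to extract the sharp rates. The cone has one vanishing principal curvature, so the naive decay of $\widehat{d\sigma_{\Gamma_R}}$ is only $R^{-1/2}$, weaker than in the spherical setting; the bilinear cone theorem is precisely what compensates for this deficit in the approach of \cite{E}.
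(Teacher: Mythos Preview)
First, note that the paper does not itself prove Theorem~\ref{BE}; it is quoted as a result ``which can be extracted from \cite{E},'' and the in-paper argument to compare against is the proof of the generalization, Theorem~\ref{restriction}, in \S3. Your overall architecture --- thicken to $\Gamma_{R,\delta}$, dualize to $\int|\widehat f|^2\,d\mu\lesssim R^{2-\alpha/2+2\delta}$ for $f$ supported there, then run a Whitney decomposition into transverse pairs --- matches \S3 exactly (see \eqref{main3}, \eqref{main5}, \eqref{main8}), so the plan is correct in outline.

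What is off is your assignment of methods to regimes. You say $\beta=\alpha'/2$ follows from ``a direct Plancherel/ball-condition argument'' while the bilinear cone input produces $\beta=\alpha'-1$. In \S3 it is the opposite: the full bilinear/Whitney machinery is carried out precisely for $1\le\alpha\le2$ and is what yields $\alpha'/2$ (the chain \eqref{main11}--\eqref{main21}); a naive argument based only on the $|\xi|^{-1/2}$ decay of cone surface measure gives something strictly weaker. For $2<\alpha\le3$ the paper simply cites Theorem~1 of \cite{E}, which also rests on bilinear restriction. Both exponents emerge from the same computation --- the split at $\alpha=2$ reflects which of the two bounds \eqref{main9}, \eqref{main10} on $\widetilde\mu*|\widehat{\phi_D}|$ is binding, not a change of method.

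Finally, \S3 does not invoke a packaged bilinear extension theorem. The bilinear input is assembled by hand from the convolution estimates \eqref{main9}--\eqref{main10} (taken from \cite{E}) together with the geometric overlap bound of Lemma~\ref{intersection lemma}. Replacing this with a black-box Wolff $L^p$ cone theorem plus an $L^p(dx)\to L^2(d\mu)$ transfer is a plausible alternative route, but the exponent bookkeeping you flag as ``the main obstacle'' is essentially the entire content of the proof, and your proposal does not carry any of it out.
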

\noindent To prove Theorem \ref{projection} we require the following generalization of Theorem \ref{BE}:
\begin{theorem}\label{restriction}
Suppose $\phi:[-1/2,1/2]\rightarrow \bbR$ is $C^{(2)}$ and satisfies 
\begin{equation*}
M/2 \le |\phi ' (t)|\leq M,\ \ |\phi '' (t)|\ge m> 0
\end{equation*}
for $t\in[0,1]$ and for positive constants $M$ and $m$. Suppose that $\mu$ is a nonnegative and compactly supported Borel probability measure on $\bbR^3$ satisfying 
\begin{equation*}
\mu \big( B(x,r)\big)\le c\, r^\alpha
\end{equation*}
for $x\in\bbR^3$ and $r>0$. If $\alpha '<\alpha$ then there is $C$ 
(depending only on $m,M,c,\alpha'$, and the diameter of the support of $\mu$)
such that 
%
%
%
\begin{equation}\label{r3}
\int_{-1/2}^{1/2} \int_{1/2}^1|\hat \mu \big( R\,\rho \,(t,\phi (t), 1)\big)|^2 d\rho \,dt
\le C\,R^{-\beta (\alpha ' )}, R>1,
\end{equation}
where 
$\beta (\alpha ' )=\alpha ' /2$ if $1\le \alpha \le 2$ and
$\beta (\alpha ' )=\alpha ' -1$ if  $2\le \alpha \le 3$.

\end{theorem}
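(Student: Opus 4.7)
My approach would be to adapt Erdo\u{g}an's proof of Theorem \ref{BE}, observing that it uses the curve $(\cos t, \sin t, 1)$ only through quantitative nondegeneracy features that are shared by $(t, \phi(t), 1)$ under our hypotheses on $\phi$. Indeed, the three vectors $(t, \phi(t), 1)$, $(1, \phi'(t), 0)$, $(0, \phi''(t), 0)$ span $\bbR^3$ with triple product equal to $\phi''(t)$, which is bounded below in absolute value by $m$; the tangent direction $(1, \phi'(t), 0)$ has norm at most $\sqrt{1+M^2}$. Geometrically, the directrix $t \mapsto (t, \phi(t))$ has plane curvature bounded away from $0$ uniformly in terms of $m$ and $M$.

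Following \cite{E}, the key analytic step is a stationary-phase bound for the Fourier transform of the surface measure on the cone $\Sigma_\phi = \{R\rho(t,\phi(t),1) : t \in [-1/2,1/2],\ \rho \in [1/2,1]\}$, giving decay of the same form as in the circular case with constants depending only on $m$ and $M$. The left-hand side of \eqref{r3} equals $\int |\hat\mu|^2\, d\sigma_R$, where $\sigma_R$ is the pushforward of $d\rho\, dt$ under $(t,\rho)\mapsto R\rho(t,\phi(t),1)$; by Parseval this rewrites as $\int \widehat{\sigma_R}(y)\, d(\mu \ast \tilde\mu)(y)$, where $\tilde\mu$ is the reflection of $\mu$. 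Combining the stationary-phase decay of $\widehat{\sigma_R}$ with the Frostman-type hypothesis on $\mu$ via Erdo\u{g}an's bilinear / $TT^*$ bootstrap then recovers the exponent $\beta(\alpha')$.

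The main obstacle is verifying that every constant in Erdo\u{g}an's rather intricate argument is uniform in $\phi$; higher-derivative information on $\phi$ must not enter. A safer alternative would be to reduce directly to Theorem \ref{BE} by localization and rescaling: partition $[-1/2, 1/2]$ into short subintervals on which $\phi$ is close to a quadratic polynomial, and on each piece apply an affine change of frequency variables with norm bounded in terms of $m$ and $M$, mapping a fixed arc of the circular cone onto the relevant piece of $\Sigma_\phi$ up to lower-order error. One could then apply Theorem \ref{BE} on each piece---with $\mu$ replaced by the pushforward of $\mu$ under the adjoint affine map, which still obeys a Frostman condition with controlled constants---and sum. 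The technical hurdle in this alternative is controlling the cubic Taylor-remainder error in the quadratic approximation of $\phi$ at the frequency scale $R$; this typically forces subintervals of length $\sim R^{-1/2}$ and careful bookkeeping when summing the local contributions.
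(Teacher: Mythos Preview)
Your first strategy---rerun Erdo\u{g}an's argument and verify uniformity in $\phi$---is exactly what the paper does, and your observation that only $C^{(2)}$ data on $\phi$ (the bounds $m,M$) should enter is on target. However, your identification of the key analytic step is off. The argument is \emph{not} stationary phase for $\widehat{\sigma_R}$ fed into $\int\widehat{\sigma_R}\,d(\mu\ast\tilde\mu)$; that route does not by itself reach the exponent $\alpha'/2$ in the range $1\le\alpha\le 2$. Instead the paper (following \cite{E}) first thickens to a neighborhood $\Gamma_{R,\delta}$, dualizes to an extension estimate $\int|\widehat f|^2\,d\mu\lesssim R^{2-\alpha/2+2\delta}$ for $f$ supported on $\Gamma_{R,\delta}$, and then runs a bilinear Whitney decomposition over pairs of separated dyadic arcs $I\sim J$. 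The decisive $\phi$--dependent ingredient is a purely geometric \emph{intersection lemma}: for $\ell$--separated intervals $I,J$, any translate of a $\delta'$--neighborhood of the cone over $I$ meets the $\delta'$--neighborhood of the cone over $J$ in a set of volume $\lesssim \delta'^2/\ell$. The paper isolates and proves this (Lemmas \ref{intersection lemma} and \ref{second}) using only $|\phi'|\le M$ and $|\phi''|\ge m$, which is precisely where your worry about ``higher-derivative information on $\phi$'' is resolved. Everything else---the rectangle containments for $\Gamma_{R,I}$, Lemma 4.1 of \cite{E}, and the summation over scales---goes through unchanged with constants depending only on $m,M$.

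Your alternative reduction to Theorem \ref{BE} by affine localization is not pursued in the paper. As you note, it would force arcs of length $\sim R^{-1/2}$ and then require recombining $\sim R^{1/2}$ local estimates without loss; the paper avoids this entirely by proving the intersection lemma directly for $\phi$.
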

\noindent (For a similar generalization of Wolff's result in \cite{W} on decay of circular means of Fourier transforms of measures on $\bbR^2$, see \cite{EO}.)

This note is organized as follows: \S 2 contains the proof of Theorem \ref{projection}, \S 3 contains the proof of Theorem \ref{restriction}, and \S 4 contains the proof of a technical lemma used in \S 3.

\section{Proof of Theorem \ref{projection}}
Suppose $ \alpha ' <\tilde \alpha <\alpha $ so that we can find a 
probability measure $\mu$ on $B$ satisfying
\begin{multline}\label{p1}
\ \ \ \ \ \ \ \ \mu \big(B(x,r)\big)\le C\,r^{\tilde \alpha}, \, x\in\bbR^3 ,\, r>0 \text{ and } \\
\int_{\bbR^3} \frac{|\hat \mu (\xi)|^2}{|\xi |^{3-\tilde \alpha}} \,d\xi <\infty .\ \ \ \ \ \ \ \ \ \ \ \ \ \ \ 
\end{multline}
Write $\pi_t (\mu )$ for the measure which is the push-forward of $\mu$ onto 
$\pi_t (\bbR^3 )$  under the projection $\pi_t$. 
For a function $f$ on $ \gamma (t )^\perp$  we have 
\begin{equation*}
\int_{ \gamma (t )^\perp}f\,d\pi_t (\mu )=
\int_{\bbR^3} f\big(x-[x\cdot \gamma (t )]\gamma (t )\big)\, d\mu (x)
\end{equation*}
so for $\xi \in \gamma (t )^\perp$ we have 
\begin{equation*}
\widehat{\pi_t (\mu )}(\xi )=\int_{\bbR^3}
e^{-2\pi i \langle \xi , (x-[x\cdot \gamma (t )]\gamma(t )\rangle}\, d\mu (x)=\hat\mu (\xi ).
\end{equation*}
To establish \eqref{p0} it is therefore enough to show that for each $t_0 \in (0,1)$ 
there is some closed interval $I=I_{t_0}$ containing $t_0$ in its interior such that  
\begin{equation}\label{p3}
\int_{I}\int_{ \gamma (t )^\perp}\frac{|\hat{\mu }(\xi )|^2}
{|\xi |^{2-\tau (\alpha '  )}}\,d\xi\,dt <\infty  ,
\end{equation}
where
\begin{equation}\label{p.1}
\tau (\alpha '  )=3\alpha '  /4 \text{  if } 1\le \alpha \le 2 ,\ \ \tau (\alpha '  )=\alpha '  -1/2 \text{   if  } 2<\alpha \le 3 .
\end{equation}
Without loss of generality we assume that $\gamma$ is parametrized by arclength. 
Let $ u(t)=\gamma' (t)$, $ v(t)=\gamma (t)\times \gamma ' (t)$. We parametrize 
$\gamma (t)^\perp$ by 
$$
(u,v)\mapsto u\cdot u (t)+v\cdot v(t)
$$
and part of $\bbR^3$ by 
\begin{equation}\label{p3.5}
(t,u,v)\mapsto u\cdot u (t)+v\cdot v(t) . 
\end{equation}
%
%
%
If we now parametrize $(u,v)$-space by polar coordinates 
$$
u=r\sin\theta ,\ v=r\cos\theta
$$
then \eqref{p3} becomes 
\begin{equation}\label{p4}
\int_0^{2\pi}\int_I \int_0^\infty
\frac{|\hat\mu \big(r(\sin\theta \, u(t)+\cos \theta \, v (t)\, )\big)|^2}{r^{1-\tau (\alpha '  )}}\, dr\,dt\,d\theta 
<\infty .
\end{equation}
To establish \eqref{p4} for every $\alpha '  <\bar \alpha$ it is enough to show that 
\begin{equation}\label{p4.25}
\int_0^{2\pi}\int_I \int_R^{2R}
\frac{|\hat\mu \big(r(\sin\theta \, u(t)+\cos \theta \, v (t)\, )\big)|^2}{r^{1-\tau (\alpha '  )}}
\, dr\,dt\,d\theta \le C(I, \alpha '  ),\  R\ge R(I)>1
\end{equation}
for every $\alpha '  <\bar \alpha$.
We will focus, without loss of generality, on the part of the integral 
in \eqref{p4.25} 
corresponding to the range $0\le\theta \le \pi /2$. We write
\begin{equation}\label{4.3}
\eta (R) =R^{-\alpha ' /4} \text { if } 1\le \alpha \le 2,\ \ \eta (R)=R^{-1/2} \text{  if } 2<\alpha\le 3 
\end {equation}
and then split the integral: 
\begin{multline}\label{p4.5}
\int_0^{\eta (R)}\int_I \int_R^{2R}
\frac{|\hat\mu \big(r(\sin\theta \, u(t)+\cos \theta \, v (t)\, )\big)|^2}{r^{1-\tau (\alpha '  )}}
\, dr\,dt\,d\theta + \\
\int_{\eta (R)}^{\pi /2}\int_I \int_R^{2R}
\frac{|\hat\mu \big(r(\sin\theta \, u(t)+\cos \theta \, v (t)\, )\big)|^2}{r^{1-\tau (\alpha '  )}}
\, dr\,dt\,d\theta \doteq \\
\mathcal{I}_1 +\mathcal{I}_2 .
\end{multline}

We begin with the second of these and will use the change of variable \eqref{p3.5}. 
The Jacobian factor $J=J(t,u,v)$ associated with \eqref{p3.5} is 
\begin{equation*}
|\det\big( u(t),   v(t), u \cdot u ' (t)+v\cdot v ' (t)\big)|=
|\langle u \cdot u ' (t)+v \cdot v ' (t) ,\gamma (t)\rangle |,
\end{equation*}
where the last equality follows because $ u(t)\times  v(t)=\pm \gamma (t)$. Since
$u(t)\perp \gamma (t)$ implies
$$
\langle  u ' (t),\gamma (t)\rangle = -\langle  u (t), \gamma ' (t)\rangle
$$
and similarly for $ v(t)$, we see that 
\begin{equation}\label{p5}
J=|u|. 
\end{equation}
To use \eqref {p5} we need some information about the multiplicity  of the change of variables \eqref{p3.5}. To obtain this information we will impose a first restriction on the size of the interval 
$I=I_{t_0}$. (When we deal with with the second integral in \eqref{p4.5} we will need to impose further restrictions on $I$.)
 Fix $t_0$ and choose coordinates for $\bbR^3$ so that 
$\gamma (t_0 )= (0,0,1)$ and $\gamma' (t_0 )=(1,0,0)$ and then write 
$\gamma=(\gamma_1 ,\gamma_2 ,\gamma_3 )$. Let $\bar\gamma (t)$ be the curve in 
$\bbR^2$ given, in a neighborhood of $t_0$, by 
\begin{equation}\label{5.1}
\bar\gamma (t)= \Big(\frac{\gamma_1 (t)}{\gamma_3 (t)},\frac{\gamma_2 (t)}{\gamma_3 (t)}\Big)
\end{equation}
and let $\tilde\gamma$ be the curve in $\bbR^3$ given by 
\begin{equation}\label{5.2}
\tilde\gamma (t)= 
\gamma (t)/\gamma_3 (t) =\big(\bar\gamma (t);1\big).
\end{equation}
We will need that fact that
if $\kappa (\bar\gamma ;t)$ is 
the curvature of $\bar\gamma$ at $t_0$ then 
\begin{equation*}
\kappa (\bar\gamma ; t_0)= |\det \big(\gamma (t_0 ),\gamma' (t_0), \gamma'' (t_0 )\big)|>0,
\end{equation*}
where the inequality is a consequence of the non degeneracy of $\gamma$. To see the
equality we begin by computing
\begin{equation*}
\bar\gamma ' =\Big(
\frac{\gamma_1 ' \gamma_3 -\gamma_1 \gamma_3 '}{\gamma_3^2},\
\frac{\gamma_2 ' \gamma_3 -\gamma_2 \gamma_3 '}{\gamma_3^2}
\Big)
\end{equation*}
and
\begin{equation*}
\bar\gamma ''=\Big(
\frac{\gamma_3^2 (\gamma_1 '' \gamma_3 -\gamma_1 \gamma_3 '')-
2\gamma_3 \gamma_3 ' (\gamma_1 ' \gamma_3 -\gamma_1 \gamma_3 ')}
{\gamma_3^4},
\frac{\gamma_3^2 (\gamma_2 '' \gamma_3 -\gamma_2 \gamma_3 '')-
2\gamma_3 \gamma_3 ' (\gamma_2 ' \gamma_3 -\gamma_2 \gamma_3 ')}
{\gamma_3^4}
\Big).
\end{equation*}
When $t=t_0$ we have 
\begin{equation*}
\bar\gamma (t_0)=(1,0),\ \bar\gamma '' (t_0 )=\big( \gamma_1 '' (t_0 ), \gamma_2 '' (t_0 )\big).
\end{equation*}
Thus $\kappa (\bar\gamma ;t_0 )=|\gamma_2 '' (t_0 )|= |\det \big(\gamma (t_0 ),\gamma' (t_0), \gamma'' (t_0 )\big)|$ as desired. Now choose $I$ small enough so that 
\begin{equation*}
\kappa (\bar\gamma ;t)>0 ,\ t\in I .
\end{equation*}
After possibly shrinking $I$ again one sees that
if $t_1 ,t_2 ,t_3 \in I$, then the vectors 
$\{\tilde\gamma (t_1 ),\tilde\gamma (t_2 ),\tilde\gamma (t_3 )\}$ are linearly independent. 
Since 
$$
\tilde\gamma (t_i)\perp \big(u \cdot u (t_i )+v \cdot v (t_i )\big),
$$
it follows that 
\eqref{p3.5} is at most three-to-one on $I\times (\bbR^2 \sim\{0\})$. 
Therefore, with 
$$
\xi =r\big(\sin\theta \, u(t)+\cos \theta \, v (t)\big)
$$
and using \eqref{p5} to write $J=|u|=|r\sin \theta |$ we have 
\begin{multline}\label{p6.25}
\mathcal {I}_2 =\int_{\eta (R)}^{\pi /2}\int_I \int_R^{2R}
\frac{|\hat\mu \big(r(\sin\theta \, u(t)+\cos \theta \, v (t)\,)\big)|^2}{r^{1-\tau (\alpha '  )}}
\, dr\,dt\,d\theta \lesssim \\
\frac{1}{\eta (R)}\int_{R\le |\xi |\le 2R}\frac{|\hat \mu (\xi )|^2}{|\xi |^{3-\tau (\alpha '  )}}\, d\xi \le
C\, R^{\alpha '  -\bar \alpha}
\end{multline}
by \eqref{p1}, \eqref{p.1}, and \eqref{4.3}.

We now obtain a similar estimate for the term $\mathcal{I}_1$. Lemma 3.2 from \cite{FO} states that the function $ v(t)=\gamma (t)\times \gamma ' (t)$ satisfies the same 
hypotheses as $\gamma (t)$:  
\begin{equation*}
\text{span}\{ v (t), v ' (t),  v '' (t)\}=\bbR^3 ,\ t\in [0,1].
\end{equation*}
We proceed as above, beginning by choosing coordinates for $\bbR^3$ so that $v(t_0 )=(0,0,1)$,
$v' (t_0 )=(1,0,0)$.  
It follows that if $\bar v$ and $\tilde v$ are defined as in \eqref{5.1} and \eqref{5.2} but with 
$ v(t)$ in place of $\gamma (t)$, then $\kappa (\bar v ;t_0) >0$. For small $\theta$ we will also need the perturbations of $\bar v$ and $\tilde v$ given by taking  
\begin{equation}\label{p7}
v_\theta (t)= \cos\theta\,  v(t) +\sin\theta\,  u(t) 
\end{equation}
instead of $\gamma$ in \eqref{5.1} and \eqref{5.2}. Using $\kappa (\bar v ;t_0) >0$ 
we choose $\theta_0 >0$ such that 
%
$\kappa (\bar v_\theta ;t_0) >0$
%
for $0\le\theta\le\theta_0$. We then further restrict 
$\theta_0$ and
the interval $I=I_{t_0}$ so that
\begin{equation}\label{p7.1}
|v_\theta (t)-(0,0,1)|\le 1/10 \text{ for } t\in I,\ 0\le\theta \le \theta_0
\end{equation}
and, for some $\tilde m >0$, we have 
\begin{equation*}
\kappa (\bar v_\theta ;t) \ge \tilde m \text{ for } 0\le\theta\le\theta_0 , \  t\in I.
\end{equation*}
After a suitable linear change of coordinates in $\bbR^2 $ 
we choose positive numbers $M$ and $m$ 
such that 
(after possibly diminishing $\theta_0$ and $I$)
the curves $\{\tilde v_\theta ( t): t\in I\}$, $0\le\theta\le\theta_0$, can be written as
\begin{equation*}
\{\big(\tilde t,\phi_\theta (\tilde t) ,1\big):\tilde  t\in[-1/2,1/2]\}
\end{equation*}
with 
\begin{equation*}
M/2 \le |\phi_\theta ' (\tilde t)|\leq M,\ \ |\phi_\theta '' (\tilde t)|\ge m > 0,
 \ \ -1/2\le\tilde t\le 1/2 ,\ \ 0\le\theta\le\theta_0 .
\end{equation*}
It follows from Theorem \ref{restriction} that for $0\le\theta\le\theta_0$ we have
\begin{equation*}
\int_I \int_{9R/10}^{22R/10}
|\hat\mu \big(r(\tilde v_\theta (t)\,)\big)|^2\, dr\,dt 
\le C\, R^{1-\beta (\alpha '  )} .
\end{equation*}
So if $R(I)$ is chosen to have $\eta \big(R(I)\big)=\theta_0$ then it follows from \eqref{p.1},
\eqref{4.3}, \eqref{p7}, \eqref{p7.1}, and the definition of $\beta (\alpha '  )$ that
\begin{equation*}
\mathcal{I}_1 =
\int_0^{\eta (R)}\int_I \int_R^{2R}
\frac{|\hat\mu \big(r(\sin\theta \, u(t)+\cos \theta \, v (t)\big)|^2}{r^{1-\tau (\alpha '  )}}
\, dr\,dt \, d\theta \le C(I,\alpha '  ),\ R\ge R(I) .
\end{equation*}
With \eqref{p6.25} and \eqref{p4.5} this gives \eqref{p4.25} and therefore completes the proof of Theorem \ref{projection}.

\section{Proof of Theorem \ref{restriction}}

For $2< \alpha \le 3$, \eqref{r3} follows directly from Theorem 1 in \cite{E}. 
For  $1\le \alpha\le 2$, the proof is an adaptation of ideas from \cite{E} and \cite{W}. Specifically, we will write 
%
\begin{multline*}
\sigma (\rho, t)=\rho \,\big(t,\phi (t), 1\big), \\
\Gamma_R =\{R\,\sigma (t,\rho ):-1/2 \le t\le 1/2 ,\, 1/2\le \rho\le 1 \}, \\
\Gamma_{R,\delta}=\Gamma_R +B(0,R^\delta ),\ R\ge 2,\ \delta >0
\end{multline*}
%
and, with $\mu$ as in Theorem \ref{restriction}, we will 
show that \eqref{r3} follows from the estimate 
\begin{equation}\label{main3}
\int_{\Gamma_{R,\delta}}|\widehat{\mu}(y)|^2\,dy \lesssim R^{2-\alpha /2 +2\delta},\ 
0<\delta \le 1.
\end{equation}
We will then adapt a bilinear argument from \cite{E} to prove \eqref{main3}.
(Throughout this proof the constants implied by the symbol $\lesssim$
can be chosen to depend only $\delta$ and on the parameters mentioned for $C$ in the statement of Theorem \ref{projection}.)

So, arguing as in \cite{W}, if $\kappa\in C^{\infty}_c (\bbR^3 )$ is equal to $1$ on the support of $\mu$, then
\begin{multline}\label{main4}
\int_{-1/2}^{1/2} \int_{1/2}^1|\hat \mu \big( R\,\sigma (\rho ,t)\big)|^2 d\rho \,dt
 =
 \int_{-1/2}^{1/2} \int_{1/2}^1\Big|
 \int \widehat\kappa\big( R\,\sigma (\rho ,t)-y\big)\,\widehat{\mu}(y)dy\Big|^2 d\rho \,dt
\lesssim \\
\int \int_{-1/2}^{1/2} \int_{1/2}^1 \big| \widehat{\kappa}\big( R\,\sigma (\rho ,t)-y\big)
\big|\,d\rho\, dt\ |\widehat{\mu}(y)|^2 \,dy .
\end{multline}
Let $\epsilon =\alpha -\alpha'$. Choose a large $p_1$ such that if $y\notin \Gamma_{R}+B(0,R^{p_1})$ then 
$\text{dist}(y,\Gamma_{R,\epsilon /4})\ge |y|/2$. Choose a large $p_2$ such that if 
$y\in \Gamma_{R}+B(0,R^{p_1})$ then $|y|\le R^{p_2}$. Finally, choose a large $K$ such that 
$(K-4)\epsilon /4 \ge3 p_2 $. 
If $y=(y_1 ,y_2 ,y_3 )$, then
\begin{multline*}
\int_{-1/2}^{1/2} \int_{1/2}^1 \big| \widehat{\kappa}(R\,\sigma (\rho ,t )-y)\big|\,d\rho  \,dt \lesssim
\int_{-1/2}^{1/2} \int_{1/2}^1 \frac{1}{\big(1+|R\,\sigma (\rho ,t)-y|\big)^{K}}\, d\rho\,dt \lesssim \\
\frac{1}{\big(1+\text{dist} (\Gamma_R ,y)\,\big)^{K-4}}\
 \int_{1/2}^1  \int_{-1/2}^{1/2} \frac{1}{\big(1+|R\,\rho\,\phi (t)-y_2 |\big)^2} \,dt \ 
  \frac{1}{\big(1+|R\,\rho-y_3 |\big)^2} \,d\rho.
\end{multline*}
Estimating the last two integrals (we use the hypothesized lower bound 
on $\phi'$), we see from \eqref{main4} that 
\begin{equation}\label{main4.5}
\int_{-1/2}^{1/2}
\int_{1/2}^1 |\widehat{\mu}\big(R\,\sigma (\rho ,t)\big)|^2 d\rho \, dt \lesssim
\frac{1}{R^2 }\int\frac{|\widehat{\mu}(y)|^2}{\big(1+\text{dist} (\Gamma_R ,y)\,\big)^{K-4}}\, dy .
\end{equation}
Now
\begin{multline*}
\int\frac{|\widehat{\mu}(y)|^2}{\big(1+\text{dist} (\Gamma_R ,y)\,\big)^{K-4}}\, dy
=\int_{\Gamma_{R,\epsilon /4}}+\int_{B(0,R^{p_2})\sim{\Gamma_{R,\epsilon /4}} }+
\int _{\{|y|\ge R^{p_2}\}}.
\end{multline*}
The first integral, the principal term, is $\lesssim  R^{2-\alpha' /2}$ by \eqref{main3}. 
Since $y\notin \Gamma_{R,\epsilon /4}$ implies 
$\text{dist}(\Gamma_R ,y)\ge R^{\epsilon /4}$, the second integral is $\lesssim 1$ by the 
fact that 

\noindent$(K-4)\epsilon /4 \ge3 p_2 $. Since $|y|\ge R^{p_2}$ implies 
$y\notin \Gamma_{R,p_1}$ and so implies $\text{dist}(\Gamma_R ,y)\ge |y|/2$, the last integral is also $\lesssim 1$.
Thus, given \eqref{main3},  \eqref{r3} follows from \eqref{main4.5}.

Turning to the proof of \eqref{main3}, we note that by duality 
(and the fact that $\mu$ is finite)
it is enough to 
suppose that $f$, satisfying $\|f\|_2 =1$, is supported on 
$\Gamma_{R,\delta}$ and then to establish the estimate
\begin{equation}\label{main5}
\int |\widehat{f}(y)|^2\, d\mu (y) \lesssim
 \,R^{2-\alpha /2 +2\delta}.
\end{equation}
The argument we will give differs from the proof of Theorem 5 in \cite{E} only in certain technical details. But,
because those details are not always obvious, we will give the complete proof.  

For $y\in\bbR^3$, write $y'$ for a point $R\sigma (\rho ' , t')$ 
($\rho ' \in [1/2 ,1],\, t' \in [-1/2,1/2]$)
on the surface $\Gamma_R$ which minimizes $\text{dist}(y, \Gamma_R )$.
For a dyadic interval $I\subset  [-1/2,1/2]$, define
\begin{equation*}
\Gamma_{R,\delta,I}=\{y\in \Gamma_{R,\delta}: t' \in I\},\, f_I =
f\cdot\chi_{\Gamma_{R,\delta,I}}\, .
\end{equation*}
For dyadic intervals $I,J \subset  [-1/2,1/2]$, we write $I\sim J$ if $I$ and $J$ have the same length and are not adjacent but have adjacent parent intervals. The decomposition 
\begin{equation}\label{main6}
[-1/2,1/2]\times [-1/2,1/2]={\bigcup_{n\ge 2}} \Big(
\bigcup _{\substack{{|I|=|J|=2^{-n}}\\{I\sim J}}}(I\times J)
\Big)
\end{equation}
leads to 
\begin{equation}\label{main7}
\int |\widehat{f}(y)|^2\, d\mu (y)\leq \sum_{n\ge 2 }\ \ \sum_{\substack{{|I|=|J|=2^{-n}}\\{I\sim J}}}
\int|\widehat{f_I} (y)\widehat{f_J}(y)|\,d\mu (y).
\end{equation}
Truncating \eqref{main6} and \eqref{main7} gives 
\begin{multline}\label{main8}
\int |\widehat{f}(y)|^2\, d\mu (y)\leq \\
\sum_{4\leq 2^n \leq 
R^{1/2}}\ \ \sum_{\substack{{|I|=|J|=2^{-n}}\\{I\sim J}}}
\int|\widehat{f_I} (y)\widehat{f_J}(y)|\,d\mu (y) +\sum_{I\in \mathcal I} \int|\widehat{f_I} (y)|^2\,d\mu (y),
\end{multline}
where $\mathcal I$ is a finitely overlapping set of dyadic intervals $I$ with $|I| \approx R^{-1/2}$.

To estimate the integrals on the right hand side of \eqref{main8}, we begin 
with two geometric observations. The first of these is that 
it follows from the hypotheses on $\phi$ that
if $I\subset [-1/2,1/2]$ is an interval with length $\ell$, then 
\begin{equation*}
\Gamma_{R,I}\doteq \{ R\,\sigma (\rho ,t): t\in I,\, 1/2\le \rho\le 1\}
\end{equation*}
is contained in a rectangle $D$ with side lengths $\lesssim R ,R\ell  ,R\ell ^2$, which we will abbreviate by saying that $D$ is an $R\times (R\ell ) \times (R\ell ^2)$ rectangle. 
%
%
Secondly, we observe that if $\ell\gtrsim R^{-1/2}$, then an $R^\delta$ neighborhood of an 
$R\times (R\ell ) \times (R\ell ^2)$ rectangle is contained in an $R^{1+\delta}\times (R^{1+\delta}\ell ) \times (R^{1+\delta}\ell ^2)$ rectangle.
It follows that if $I$ has length $2^{-n}\gtrsim R^{-1/2}$, then the support of $f_I$ is contained in a rectangle 
$D$ with dimensions $R^{1+\delta}\times (R^{1+\delta}2^{-n} ) \times (R^{1+\delta}2^{-2n})$.

The next lemma is part of Lemma 4.1 in \cite{E}. To state it, we introduce some notation: 
$\phi$ is a nonnegative Schwartz function such that $\phi (x)=1$ for $x$ in the unit cube $Q$,  $\phi (x)=0$ if $x\notin 2Q$,
and, for each $M>0$, 
\begin{equation*}
|\widehat{\phi}|\leq C_M \sum_{j=1}^\infty 2^{-Mj}\chi_{2^j Q}.
\end{equation*}
For a rectangle $D\subset\bbR^3$, $\phi_D$ will stand for $\phi\circ b$, where $b$ is an affine mapping
which takes $D$ onto $Q$. If $D$ is a rectangle with dimensions 
$a_1 \times a_2 \times a_3$, then a dual rectangle of $D$ is any rectangle with the same axis directions and with dimensions $a_1^{-1}\times a_2^{-1}\times a_3^{-1}$.
\begin{lemma}
Suppose 
$1\le\alpha\le2$ and 
that $\mu$ is a non-negative Borel measure on $\bbR^3$ satisfying 
\eqref{r2}. Suppose $D$ is a rectangle with dimensions $R_1 \times R_2 \times R_3$, where $R_3\lesssim R_2 \lesssim R_1\lesssim R$,
and let $D_{\text{dual}}$ be the dual of $D$ centered at the origin. Then, 
if $\widetilde{\mu}(E)=\mu (-E)$,
\begin{equation}\label{main9}
(\widetilde{\mu}\ast |\widehat{\phi _D}|)(y)\lesssim R_2^{2-\alpha}R_1,\, y\in\bbR^3 
\end{equation}
and, if $K\gtrsim 1,\, y_0 \in \bbR^3$, then
\begin{equation}\label{main10}
\int_{K\cdot D_{\text{dual}}}(\widetilde{\mu}\ast |\widehat{\phi _D}|)(y_0 +y)\,dy\lesssim K^{\alpha}R_2^{1-\alpha}R_3^{-1}.
\end{equation}
\end{lemma}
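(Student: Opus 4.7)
The plan is to exploit two facts: first, that $|\widehat{\phi_D}|$ is essentially $|D| = R_1 R_2 R_3$ times the characteristic function of $D_{\text{dual}}$ with rapid decay on larger dilates; and second, that the ball condition $\mu(B(x,r)) \le c r^\alpha$ together with a careful choice of covering scale bounds the $\mu$-mass of any translate of $\rho D_{\text{dual}}$ ($\rho \ge 1$) by $\rho^\alpha R_2^{1-\alpha} R_3^{-1}$, independently of the translate. The desired inequalities \eqref{main9} and \eqref{main10} then follow by elementary manipulations of a convolution.

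The first fact comes from a change of variables. Writing $\phi_D = \phi \circ b$, the given decay of $\widehat{\phi}$ and a standard Fourier-transform computation yield the pointwise bound
\[
|\widehat{\phi_D}(\xi)| \lesssim R_1 R_2 R_3 \sum_{j\ge 0} 2^{-Mj}\,\chi_{2^j D_{\text{dual}}}(\xi)
\]
for each $M$, and in particular $\|\widehat{\phi_D}\|_1 \lesssim 1$. The second fact is the key geometric lemma: for any $\rho \ge 1$ and $w \in \bbR^3$,
\[
\mu(w + \rho D_{\text{dual}}) \lesssim \rho^\alpha R_2^{1-\alpha} R_3^{-1}.
\]
This follows by covering $\rho D_{\text{dual}}$ (whose sides are $\rho/R_1 \le \rho/R_2 \le \rho/R_3$) by roughly $R_2/R_3$ balls of radius $\rho/R_2$ (the second-longest side), each sitting over a cross-section of the long axis; the ball condition gives $\mu \lesssim (R_2/R_3)(\rho/R_2)^\alpha$, which is the claim. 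Note that this specific choice of radius, rather than $\rho/R_1$ or $\rho/R_3$, is what forces the exponent $2-\alpha$ to attach to $R_2$ (not to $R_1$ or $R_3$).

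For \eqref{main9}, I would combine the two facts directly:
\[
(\widetilde{\mu} \ast |\widehat{\phi_D}|)(y) = \int |\widehat{\phi_D}(y+z)|\,d\mu(z) \lesssim R_1 R_2 R_3 \sum_{j\ge 0} 2^{-Mj}\,\mu(2^j D_{\text{dual}} - y),
\]
and apply the geometric lemma with $\rho = 2^j$. Choosing $M > \alpha$ and summing gives
\[
\lesssim R_1 R_2 R_3 \cdot R_2^{1-\alpha} R_3^{-1} = R_1 R_2^{2-\alpha}.
\]

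For \eqref{main10}, I would swap the order of integration. After the change of variables $w = y_0 + y + z$,
\[
\int_{K \cdot D_{\text{dual}}}(\widetilde{\mu}\ast|\widehat{\phi_D}|)(y_0+y)\,dy
= \int |\widehat{\phi_D}(w)|\,\mu\bigl(w - y_0 + K D_{\text{dual}}\bigr)\,dw.
\]
By the geometric lemma applied with $\rho = K$, $\mu(w - y_0 + K D_{\text{dual}}) \lesssim K^\alpha R_2^{1-\alpha} R_3^{-1}$ uniformly in $w$. Pulling this supremum out and invoking $\|\widehat{\phi_D}\|_1 \lesssim 1$ yields the required bound. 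The only substantive step is the covering lemma; choosing the middle scale $\rho/R_2$ is the idea behind everything, and it is driven by the restriction $1\le\alpha\le2$.
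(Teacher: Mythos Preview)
The paper does not give its own proof of this lemma; it simply records it as part of Lemma~4.1 in \cite{E}. Your argument is correct and is the standard one: the decisive point is covering a translate of $\rho D_{\text{dual}}$ by $\sim R_2/R_3$ balls of the \emph{middle} radius $\rho/R_2$, which is optimal precisely in the range $1\le\alpha\le2$, and the rest is Fubini plus a geometric series in $j$.
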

\noindent Now if $I\in\mathcal I$ and $\text{supp}f_I \subset D$ as above, the identity 
$\widehat{f_I}=\widehat{f_I}\ast \widehat{\phi _D}$ implies that
\begin{equation*}
|\widehat{f_I}|\leq (|\widehat{f_I}|^2 \ast |\widehat{\phi _D}|)^{1/2}\|\widehat{\phi_D}\|_1^{1/2}\lesssim
(|\widehat{f_I}|^2 \ast |\widehat{\phi _D}|)^{1/2} 
\end{equation*}
and so 
\begin{multline}\label{main11}
\int|\widehat{f_I} (y)|^2\,d\mu (y)\lesssim
 \int  (|\widehat{f_I}|^2 \ast |\widehat{\phi _D}|)(y)\,d\mu (y)
= \\
\int |\widehat{f_I}(y)|^2 (\widetilde{\mu}\ast |\widehat{\phi_D} |)(-y)\,dy
\lesssim 
\|f_I \|_2^2 \, R^{2-\alpha /2 +2\delta},
\end{multline}
where the last inequality follows from \eqref{main9}, the fact that $D$ has dimensions 
$R^{1+\delta}\times R^{1/2 +\delta}\times R^\delta$ since $2^{-n}\approx R^{-1/2}$,
and the inequalities $1\le\alpha \le 2$. Thus the estimate 
\begin{equation}\label{main12}
\sum_{I\in \mathcal I} \int|\widehat{f_I} (y)|^2\,d\mu (y)\lesssim 
R^{2-\alpha /2 +2\delta}
\sum_{I\in \mathcal I}\|f_I\|_2^2 \lesssim
R^{2-\alpha /2 +2\delta}
\end{equation}
follows from $\|f\|_2 =1$ and the finite overlap of the intervals $I\in\mathcal I$
(which implies finite overlap for the supports of the $f_I ,I\in\mathcal I$).

To bound the principal term of the right hand side of \eqref{main8}, fix $n$ with $4\leq 2^n \leq R^{1/2}$
and a pair $I,J$ of dyadic intervals with $|I|=|J|=2^{-n}$ and $I\sim J$. Since $I\sim J$, the support of 
$f_I \ast f_J$ is contained in a rectangle $D$ with dimensions 
$R^{1+\delta}\times (R^{1+\delta}2^{-n} )\times (R^{1+\delta}2^{-2n})$. For later reference, let $u,v,w$ be unit vectors in the directions of the sides of $D$ 
with $u$ parallel to the longest side and $w$ parallel to the shortest side. As in \eqref{main11},
\begin{multline}\label{main13}
\int|\widehat{f_I} (y)\widehat{f_J} (y)|\,d\mu (y)\lesssim 
\int  (|\widehat{f_I}\,\widehat{f_J}| \ast |\widehat{\phi _D}|)(y)\,d\mu (y)
= \\
\int |\widehat{f_I}(y)\widehat{f_J}(y)|\, (\widetilde{\mu}\ast |\widehat{\phi_D} |)(-y)\,dy.
\end{multline}
Now tile $\bbR^3$ with rectangles $P$ having exact dimensions 
$(C2^{-2n})\times (C2^{-n}) \times C$ 
for some large $C>0$ to be chosen later
and having shortest side in the direction of $u$ and longest side in the direction of $w$. 
Let $\psi$ be a fixed nonnegative Schwartz function satisfying $1\leq \psi (y)\leq 2$ if $y\in Q$, 
$\widehat{\psi}(x)=0$ if $x\notin Q$, and 
\begin{equation}\label{main13.5}
\psi \leq C_M \sum_{j=1}^\infty 2^{-Mj} \chi_{2^j Q}.
\end{equation}
Since $\sum_P \psi_P^3 \approx 1$,
it follows from \eqref{main13} that if $f_{I,P}$ is defined by 
\begin{equation*}
\widehat{f_{I,P}}=\psi_P \cdot\widehat{f_I}
\end{equation*}
then 
\begin{multline}\label{main14}
\int|\widehat{f_I} (y)\widehat{f_J} (y)|\,d\mu (y)\lesssim \\
\sum_P \Big(\int |\widehat{f_{I,P}}(y)\widehat{f_{J,P}}(y)|^2\,dy\Big)^{1/2}
\Big(\int \big| (\widetilde{\mu}\ast|\widehat{\phi_D}|)(-y)\psi_P (y)\big|
^2\,dy\Big)^{1/2}.
\end{multline}

To estimate the first integral in this sum, we begin by noting that the support of $f_{I,P}$ is contained in $\text{supp}(f_I )+P_{\text{dual}}$, where $P_{\text{dual}}$ is a rectangle dual to $P$ and centered at the origin.
Let $\widetilde I$ be the interval with the same center as $I$ but lengthened by $2^{-n}/10$ 
and let $\widetilde J$ be defined similarly. 
Since $I\sim J$, it follows that $\text{dist}(\widetilde{I} ,\widetilde{J})\geq 2^{-n}/2$.
Now the support of $f_I$ is contained in $\Gamma_{R,I}+B(0,R^\delta )$
and $P_{\text{dual}}$ has dimensions $( 2^{2n} C^{-1})\times ( 2^{n} C^{-1})\times C^{-1}$ 
and side in the direction of $v$ at an angle 
$\lesssim2^{-n}$ to any of the tangents to the curve $\big(t,\phi (t)\big)$ 
for $t\in \widetilde{I}$ (or $t\in\widetilde{J}$). Recalling that $2^n \lesssim R^{1/2}$, one can
check that, if $C$ is large enough, 
\begin{equation*}
\text{supp}(f_{I,P})\subset \Gamma_{R,\widetilde{I}}+B(0,CR^\delta )
\end{equation*}
and, similarly,
\begin{equation*}
\text{supp}(f_{J,P})\subset \Gamma_{R,\widetilde{J}}+B(0,CR^\delta ).
\end{equation*}
The next lemma follows from Lemma \ref{second} in \S 4 by scaling:

\begin{lemma}\label{intersection lemma} 
Suppose that the closed intervals $\tilde I , \tilde J \subset [0,1]$ satisfy  
$\text{dist}\,(\tilde I ,\tilde J )\geq c \,2^{-n}$.
Then, for $\delta >0$ and $x\in\bbR^3$, there is the following estimate for the three-dimensional Lebesgue measure of the intersection of translates of neighborhoods of $\Gamma_{R,\tilde I}$ and $\Gamma_{R, \tilde J}$:
\begin{equation*}
\big| \, x+\Gamma_{R,\tilde I}+B(0,C R^\delta )\ \cap\  \Gamma_{R,\tilde J}+B(0,CR^\delta ) \,
\big| \lesssim R^{1+2\delta} 2^n .
\end{equation*}
%
\end{lemma}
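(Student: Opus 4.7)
The plan is to deduce the claim directly from Lemma \ref{second} by a single rescaling. First I would introduce the dilation $\Phi\colon\bbR^3\to\bbR^3$ given by $\Phi(y)=R^{-1}y$. Because $\Phi$ is linear, it commutes with translations and Minkowski sums, and three-dimensional Lebesgue measure scales by $R^{-3}$. Under $\Phi$, the curved piece $\Gamma_{R,\tilde I}$ becomes $\Gamma_{1,\tilde I}$, i.e.\ the corresponding piece of the unit-scale truncated cone over $(t,\phi(t),1)$, and each ball $B(0,CR^\delta)$ is sent to $B(0,CR^{\delta-1})$.

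Applying $\Phi$ to both factors of the intersection gives
\begin{align*}
&\bigl|\,x+\Gamma_{R,\tilde I}+B(0,CR^\delta)\ \cap\ \Gamma_{R,\tilde J}+B(0,CR^\delta)\,\bigr| \\
&\qquad = R^3\,\bigl|\,\Phi(x)+\Gamma_{1,\tilde I}+B(0,CR^{\delta-1})\ \cap\ \Gamma_{1,\tilde J}+B(0,CR^{\delta-1})\,\bigr|.
\end{align*}
I would then invoke Lemma \ref{second} of \S 4, which is the $R=1$ case of the present claim: for parameter intervals separated by at least $c\,2^{-n}$, it bounds the analogous intersection of $\varepsilon$-neighborhoods by $\lesssim \varepsilon^2\,2^n$. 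Taking $\varepsilon=CR^{\delta-1}$ and multiplying by $R^3$ yields
$$R^3\cdot R^{2(\delta-1)}\cdot 2^n \;=\; R^{1+2\delta}\,2^n,$$
which is the required estimate (any factor of $C$ is absorbed into the implicit constant).

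The proof of the present lemma is thus this one-line scaling, and carries no real difficulty at this level. The substantive geometric content sits entirely in Lemma \ref{second}: one must show at unit scale that two arcs of the cone over $(t,\phi(t),1)$ whose parameters lie in intervals at distance $\gtrsim 2^{-n}$ have tangent planes separated by angles $\gtrsim 2^{-n}$ (using the quantitative convexity $|\phi''|\ge m$ from the hypothesis), so that the intersection of their $\varepsilon$-neighborhoods behaves like the intersection of two slabs meeting transversally at angle $2^{-n}$, giving the gain factor $2^n$ relative to the trivial product bound $\varepsilon^2\cdot 1$. That transversality analysis, restricted to the $x$-translated setting for arbitrary base point $x\in\bbR^3$, is the only potentially delicate step in the chain, and it is what \S 4 is devoted to.
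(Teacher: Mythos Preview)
Your proposal is correct and is exactly the argument the paper intends: the paper's entire proof of this lemma is the single sentence ``follows from Lemma \ref{second} in \S 4 by scaling,'' and your dilation $\Phi(y)=R^{-1}y$ carries this out verbatim, turning the $R$-scale neighborhoods into the $\Sigma_{I,\delta'}$ of Lemma \ref{second} with $\delta'=CR^{\delta-1}$ and $\ell=c\,2^{-n}$, whence $R^{3}\cdot\delta'^{2}/\ell\lesssim R^{1+2\delta}2^{n}$. One minor remark: your informal description of the mechanism behind Lemma \ref{second} (transversal slabs at angle $\sim 2^{-n}$) is a reasonable heuristic, but the paper's actual proof proceeds differently, by slicing at fixed third coordinate and reducing to a two-dimensional curve-intersection estimate (Lemma \ref{first}); this does not affect the correctness of your reduction here.
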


\noindent It follows from Lemma \ref{intersection lemma} that for $x\in\bbR^3$ we have
\begin{equation}\label{main15}
\big| \,x+\text{supp}(f_{I,P})\ \cap\ \text{supp}(f_{J,P})\,\big|\lesssim R^{1+2\delta}2^n  .
\end{equation}
Now 
\begin{equation*}
\int |\widehat{f_{I,P}}(y)\widehat{f_{J,P}}(y)|^2\,dy =
\int|\widetilde{f_{I,P}}\ast {f_{J,P}}(x)|^2\, dx
\end{equation*}
and 
\begin{multline*}
|\widetilde{f_{I,P}}\ast {f_{J,P}}(x)|\leq\int |f_{I,P}(w-x)\,f_{J,P}(w)|\,dw \leq \\
|x+\text{supp}(f_{I,P})\cap \text{supp}(f_{J,P})|^{1/2}\, \big(|\widetilde{f_{I,P}}|^2 \ast |{f_{J,P}}|^2 (x)\big)^{1/2}.
\end{multline*}
Thus, by \eqref{main15},
\begin{multline}\label{main16}
\Big(\int |\widehat{f_{I,P}}(y)\widehat{f_{J,P}}(y)|^2\,dy \Big)^{1/2}\lesssim
R^{1/2+\delta} 2^{n/2}\Big(\int |\widetilde{f_{I,P}}|^2 \ast |{f_{J,P}}|^2 (x)\,dx \Big)^{1/2} = \\
R^{1/2 +\delta} 2^{n/2}\|f_{I,P}\|_2 \|f_{J,P}\|_2 .
\end{multline}

To estimate the second integral in the sum \eqref{main14} we use \eqref{main13.5} to observe that
\begin{equation*}
\psi_P \lesssim \sum_{j=1}^{\infty}2^{-Mj}\chi_{2^j P}.
\end{equation*}
Thus
\begin{equation*}
\int (\widetilde{\mu}\ast|\widehat{\phi_D}|)(-y)\psi_P (y) \,dy \lesssim
 \sum_{j=1}^{\infty}2^{-Mj}\int_{2^j P} (\widetilde{\mu}\ast|\widehat{\phi_D}|)(-y) \,dy .
\end{equation*}
Noting that $2^j P \subset y_P +K D_{\text{dual}}$  for some $K\lesssim R^{1+\delta}2^{-2n+j}$ 
and some 

\noindent $y_P \in \bbR^3$, we apply \eqref{main10} to obtain 
\begin{multline*}
\int (\widetilde{\mu}\ast|\widehat{\phi_D}|)(-y)\psi_P (y) \,dy \lesssim \\
\sum_{j=1}^{\infty}2^{-Mj}( R^{1+\delta}2^{-2n+j})^\alpha (R^{1+\delta}2^{-n})^{1-\alpha }(R^{1+\delta}2^{-2n})^{-1}
\lesssim 2^{-n(\alpha -1)}.
\end{multline*}
Since 
\begin{equation*}
(\widetilde{\mu}\ast|\widehat{\phi_D}|)(-y) \lesssim (R^{1+\delta}2^{-n})^{2-\alpha }\,R^{1+\delta}
\end{equation*}
by \eqref{main9} and since $\psi_P (y)\lesssim 1$, it follows that 
\begin{equation}\label{main19}
\Big(\int \big((\widetilde{\mu}\ast|\widehat{\phi_D}|)(-y)\psi_P (y)\big)^2 \,dy \Big)^{1/2}\lesssim
R^{(1+\delta )(3-\alpha )/2}\, 2^{-n/2}.
\end{equation}

Now \eqref{main16} and \eqref{main19}
imply, by \eqref{main14}, that
\begin{equation*}
\int|\widehat{f_I} (y)\widehat{f_J} (y)|\,d\mu (y)\lesssim
R^{(1+\delta )(2-\alpha /2)}\Big(\sum_P \|f_{I,P}\|_2^2 \Big)^{1/2} \Big(\sum_P \|f_{J,P}\|_2^2 \Big)^{1/2}.
\end{equation*}
Since
\begin{equation*}
\sum_P \|\widehat{f_{I,P}}\|_2^2 =
\int |\widehat{f_I}(y)|^2 \sum_P |\psi_P (y)|^2 \, dy,
\end{equation*}
it follows from $\sum_P \psi_P ^2 \lesssim 1$ that
\begin{equation*}
\int|\widehat{f_I} (y)\widehat{f_J} (y)|\,d\mu (y)\lesssim
R^{(1+\delta )(2-\alpha /2)}\|f_I \|_2 \|f_J \|_2 .
\end{equation*}
Thus 
\begin{multline}\label{main21}
\sum_{\substack{{|I|=|J|=2^{-n}}\\{I\sim J}}} \int|\widehat{f_I} (y)\widehat{f_J}(y)|\,d\mu (y)\lesssim \\
R^{(1+\delta )(2-\alpha /2)}\sum_{\substack{{|I|=|J|=2^{-n}}\\{I\sim J}}}
\|f_I \|_2 \|f_J \|_2 \lesssim \\
R^{(1+\delta )(2-\alpha /2)}\|f\|_2^2 .
\end{multline}
Now \eqref{main5} follows from \eqref{main8}, \eqref{main12}, \eqref{main21}, and the fact that the first sum in \eqref{main8} has $\lesssim \log R$ terms.
This completes the proof of Theorem \ref{restriction}.

\section {Two lemmas}

As mentioned in \S 3, Lemma \ref{intersection lemma} follows from Lemma \ref{second}
below. The proof of Lemma \ref{second} will use the following fact:

\begin{lemma}\label{first}
Suppose $\phi_1 ,\phi_2$ are functions on $[c,d]$ with $|\phi_1 '(u_1 )-\phi_2' (u_2 )|\ge a>0$ for all $u_1 ,u_2 \in [c,d]$ and  
$|\phi_i ' |\leq b$. For $\delta' >0$ let 
\begin{equation*}
C_{i,\delta'}=\{\big(u,\phi_i (u)\big):u\in [c,d]\}+B(0,\delta' ).
\end{equation*}
Then
\begin{equation}\label{c}
|C_{1,\delta'}\cap C_{2,\delta'}|\le 16(1+b)^2 \delta'^2 /a  .
\end{equation}
\end{lemma}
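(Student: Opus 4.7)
The plan is to reduce the area of $C_{1,\delta'}\cap C_{2,\delta'}$ to two one-dimensional bounds and multiply them via Fubini: one controlling the horizontal extent of the intersection, the other bounding the vertical slices.

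For the horizontal extent, I would first observe that each tube is contained in a Lipschitz slab around the corresponding graph: if $|(x,y)-(u,\phi_i(u))|<\delta'$, then $|x-u|<\delta'$ and, using $|\phi_i'|\le b$,
$|y-\phi_i(x)|\le |y-\phi_i(u)|+|\phi_i(u)-\phi_i(x)|\le \delta'+b\delta'=(1+b)\delta'.$
Consequently, any $(x,y)\in C_{1,\delta'}\cap C_{2,\delta'}$ satisfies $|h(x)|\le 2(1+b)\delta'$, where $h=\phi_1-\phi_2$. The key step is then to show that $h$ is $a$-expanding: for any $x_1<x_2$ in $[c,d]$, two applications of the mean value theorem (one to each $\phi_i$) give $h(x_2)-h(x_1)=(x_2-x_1)\bigl(\phi_1'(\xi_1)-\phi_2'(\xi_2)\bigr)$ for some $\xi_1,\xi_2\in(x_1,x_2)$, and the hypothesis forces the right-hand side to have magnitude at least $a|x_2-x_1|$. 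Hence the projection of $C_{1,\delta'}\cap C_{2,\delta'}$ to the $x$-axis has diameter (and so Lebesgue measure) at most $4(1+b)\delta'/a$.

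For each admissible $x$, the vertical slice of the intersection lies in both intervals $\{y:|y-\phi_i(x)|\le(1+b)\delta'\}$, so in their intersection, giving a slice of length at most $2(1+b)\delta'$. Multiplying yields area at most $8(1+b)^2\delta'^2/a$, which is well within the claimed $16(1+b)^2\delta'^2/a$.

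The only mildly delicate point is the two-variable mean value step: the hypothesis is a separation for pairs of \emph{independent} derivative values, which by itself does not force $\phi_1'-\phi_2'$ to have constant sign as a single-variable function. However, MVT applied separately to each $\phi_i$ introduces exactly such an independent pair of intermediate points, which is precisely what is needed to convert the two-variable hypothesis into an $a$-expansion of $h$. Beyond this observation the argument is routine.
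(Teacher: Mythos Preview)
Your argument is correct and is essentially the paper's own proof: both reduce to the Lipschitz slab $|y-\phi_i(x)|\le(1+b)\delta'$, use separate applications of the mean value theorem to convert the two-variable derivative hypothesis into an $a$-expansion of $h=\phi_1-\phi_2$, and finish by multiplying horizontal extent by vertical slice bound. The only points worth noting are that the paper phrases the horizontal bound via a fixed reference point $u_0$ (losing a harmless factor of $2$ relative to your direct diameter estimate) and is explicit about first extending the $\phi_i$ to $[c-\delta',d+\delta']$ so that $\phi_i(x)$ and the intermediate MVT points are defined --- your plan should include this routine step as well.
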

\begin{proof}
To prove the lemma
we begin by noting that we may the extend the $\phi_i$ so that they are defined on 
$[c-\delta' ,d+\delta' ]$ and satisfy the lemma's hypotheses on this larger interval.
We may also assume that the intersection in \eqref{c} is nonempty and then choose 
$u_0 \in [c,d]$ with 
\begin{equation*}
|\phi_1 (u_0 ) -\phi_2 (u_0 )|< (2+2b)\delta' .
\end{equation*}
Because of the assumptions on the $\phi_i$ it follows that 
\begin{equation}\label{d}
\text{if }|u-u_0 |\ge(4+4b) \delta' /a \text{ then } |\phi_2 (u) -\phi_1 (u)|\ge (2+2b)\delta' .
\end{equation}
Now assume that $(x_1 ,x_2 )$ is in the intersection in \eqref{c} and so 
\begin{equation*}
|(x_1 ,x_2 )-(u_i ,\phi_i (u_i ))|<\delta' ,\ i=1,2
\end{equation*}
for some $u_1 ,u_2 \in [c,d]$.
Then 
\begin{equation}\label{e}
|x_1 -u_i |,\ |x_2 -\phi_i (u_i )|<\delta' , \ i=1,2 .
\end{equation}
Now 
$$
|x_2 -\phi_i (x_1)|\le |x_2 -\phi_i (u_i )|+| \phi_i (u_i )-\phi_i (x_1) | < (1+b) \delta' 
$$
by \eqref{e} and $|\phi_i ' |\le b$
and so $|\phi_1 (x_1 )-\phi_2 (x_1 )| <(2+2b)\delta'$. 
Thus \eqref{d} shows that
\begin{equation}\label{f}
|x_1 -u_0 |< (4+4b)\delta' /a .
\end{equation}
%
Since $|x_2 -\phi_1 (x_1)|< (1+b)\delta'$, it follows 
from \eqref{f} (and the fact that $(x_1 ,x_2 )$ is a generic point of the intersection in \eqref{c}) that  \eqref{c} holds, proving the lemma. 
\end{proof}

\begin{lemma}\label{second}
Suppose the real-valued function $\phi$ on $[-1/2,1/2]$ satisfies estimates 
$|\phi (t)|,\,|\phi ' (t)|\le M$, $|\phi '' (t)|\ge m>0$ for $-1/2\le t\le 1/2$. For 
$I\subset [-1/2,1/2]$ 
and $0<\delta' <1$ define
\begin{equation*}
\Sigma_{I,\delta'}\doteq \{\rho \big(t,\phi (t),1\big): t\in I,\, 1/2\le \rho \le 1\}+B(0,\delta' ).
\end{equation*}
There is a positive constant $C$ depending only on $M$ and $m$ such that 
if $I,J\subset [-1/2,1/2]$ are $\ell$-separated subintervals of $[0,1]$ and $x\in\bbR^3$ then 
\begin{equation}\label{L1}
\big|(x+\Sigma_{I,\delta'})\cap \Sigma_{J,\delta}|\le C\delta'^2 /\ell .
\end{equation}
\end{lemma}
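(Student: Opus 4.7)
The plan is to reduce the three-dimensional intersection estimate to a two-dimensional one by slicing along the third (axial) coordinate, and then apply Lemma \ref{first} on each slice.

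\textbf{Slicing.} Write $x=(x_1,x_2,x_3)$ and consider the slice
of $(x+\Sigma_{I,\delta'})\cap\Sigma_{J,\delta'}$ at height $z=z_0$. A point $(y_1,y_2,z_0)$ lies in $\Sigma_{J,\delta'}$ iff $y_1=\rho t+e_1$, $y_2=\rho\phi(t)+e_2$, $z_0=\rho+e_3$ for some $t\in J$, $\rho\in[1/2,1]$, $|e|<\delta'$. In particular $\rho=z_0-e_3$, and substituting gives
\[(y_1,y_2)=z_0\bigl(t,\phi(t)\bigr)-e_3\bigl(t,\phi(t)\bigr)+(e_1,e_2),\]
so the slice is contained in the planar $C\delta'$-neighborhood (with $C=C(M)$) of the curve $\Gamma^{(2)}_{z_0}:=\{z_0(t,\phi(t)):t\in J\}$. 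An analogous computation shows the slice of $x+\Sigma_{I,\delta'}$ at $z_0$ is contained in the planar $C\delta'$-neighborhood of $\Gamma^{(1)}_{z_0}:=(x_1,x_2)+\{\bar\rho(t,\phi(t)):t\in I\}$ with $\bar\rho:=z_0-x_3$. The slice is nonempty only for $z_0$ in an interval of length $\le 2$ (determined by the common $z$-range of both sets after translation).

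\textbf{Reduction to Lemma \ref{first}.} Parametrize the two planar curves as graphs: writing $s=\bar\rho t$ resp.\ $s=z_0 t$, we see $\Gamma^{(1)}_{z_0}$ is the graph of $g_1(s)=x_2+\bar\rho\,\phi\bigl((s-x_1)/\bar\rho\bigr)$ over $s\in x_1+\bar\rho I$, and $\Gamma^{(2)}_{z_0}$ is the graph of $g_2(s)=z_0\,\phi(s/z_0)$ over $s\in z_0 J$. Since $\bar\rho, z_0\in[1/2,1]$ (otherwise the slice is empty) we have $|g_i'|\le M$. For $s_1\in x_1+\bar\rho I$ and $s_2\in z_0 J$, setting $t_1=(s_1-x_1)/\bar\rho\in I$ and $t_2=s_2/z_0\in J$, the mean value theorem together with $|\phi''|\ge m$ gives
\[|g_1'(s_1)-g_2'(s_2)|=|\phi'(t_1)-\phi'(t_2)|\ge m|t_1-t_2|\ge m\ell.\]
Thus the derivative-separation hypothesis of Lemma \ref{first} holds with $a=m\ell$, $b=M$.

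\textbf{Conclusion.} Applying Lemma \ref{first} (with $\delta'$ replaced by $C\delta'$) on each slice yields
\[\bigl|(x+\Sigma_{I,\delta'})\cap\Sigma_{J,\delta'}\bigr|_{z=z_0}\lesssim\frac{(1+M)^2(C\delta')^2}{m\ell}\lesssim_{M,m}\frac{\delta'^2}{\ell},\]
and integrating over the $O(1)$-length $z_0$-range gives \eqref{L1}.

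\textbf{Main obstacle.} The only subtle point is that Lemma \ref{first} is stated for functions on a common interval $[c,d]$, whereas $g_1,g_2$ naturally live on the different intervals $x_1+\bar\rho I$ and $z_0 J$. This is resolved by extending both $g_i$ linearly outside their natural domains to the common interval $[-1,1]$ (say), using the slopes at the endpoints; this preserves $|g_i'|\le M$ and, because the extended $g_i'$ only takes values already attained by $g_i'$ on the original domain, preserves the derivative-separation lower bound $m\ell$. One can then invoke Lemma \ref{first} verbatim.
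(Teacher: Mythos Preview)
Your proof is correct and follows the same strategy as the paper: slice at a fixed value of the third coordinate, observe that each slice lies in the $C\delta'$-neighborhoods of two planar graphs whose derivatives are $m\ell$-separated (via $|\phi''|\ge m$), and apply Lemma~\ref{first}. The parametrizations $g_1(s)=x_2+\bar\rho\,\phi((s-x_1)/\bar\rho)$ and $g_2(s)=z_0\,\phi(s/z_0)$ you write down are exactly the paper's $\phi_1,\phi_2$.

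The one place where you and the paper diverge is in handling the fact that $g_1$ and $g_2$ naturally live on different intervals $x_1+\bar\rho I$ and $z_0 J$. The paper restricts to the intersection $[c,d]$ of these two intervals, applies Lemma~\ref{first} there, and then deals separately with the boundary cases (e.g.\ $u_1\in[c_1,d_1]$, $u_2\notin[c_1,d_1]$) by a direct $O(\delta'^2)$ estimate; this is also why the paper first extends $\phi$ to $[-20,20]$, so that both $\phi_1,\phi_2$ are defined on an interval containing both $[c_1,d_1]$ and $[c_2,d_2]$. Your device---extend each $g_i$ linearly past its endpoints using the endpoint slope, so that the extended $g_i'$ only takes values already in $\phi'(I)$ (resp.\ $\phi'(J)$) and hence the separation $|g_1'-g_2'|\ge m\ell$ persists---is cleaner and avoids the case analysis entirely. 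One small caveat: the common interval to which you extend should be chosen large enough to contain $x_1+\bar\rho I$, which need not sit inside $[-1,1]$; but whenever the slice is nonempty the two base intervals overlap up to $C\delta'$, so $[-3,3]$ (say) suffices.
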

\begin{proof}
We can extend $\phi$ to $[-20,20]$ with estimates 
$|\phi (t)|,\,|\phi ' (t)|\le M'$, $|\phi '' (t)|\ge m'>0$ for $-3\le t\le 3$ and with $M'$, $m'$ depending only on $M$ and $m$. 
Our strategy will be to estimate the two-dimensional Lebesgue measure of certain sections 
of $(x+\Sigma_{I,\delta'})\cap \Sigma_{J,\delta'}$.
Recall the notation $\sigma (t,\rho )=\rho \big(t,\phi (t),1\big)$ and assume that 
$y,y' \in (x+\Sigma_{I,\delta'})\cap \Sigma_{J,\delta'}$. Then there are $\rho, t, \tilde\rho , \tilde t$ with $t\in I,\, \tilde t\in J$ such that 
\begin{equation}\label{L2}
|y-x-\sigma (t,\rho)|,\, |y-\sigma (\tilde t,\tilde\rho )|<\delta' 
\end{equation}
and there are 
$\rho ', t', \tilde\rho ', \tilde t'$ with $t'\in I,\, \tilde t '\in J$ such that 
\begin{equation}\label{L3}
|y'-x-\sigma (t',\rho')|,\, |y'-\sigma (\tilde t ',\tilde\rho ')|<\delta' .
\end{equation}
Write $x=(x_1 ,x_2 ,x_3 )$ and similarly for $y$ and $y'$. We are interested in the two-dimensional measure of the section defined by \lq\lq third coordinate = $c$" - in fact,
\eqref{L1} will follow when we show that the two-dimensional Lebesgue measure 
of this section is $\le C\delta'^2 /\ell$ - 
and so we assume that $y_3 =y'_3 =c$. It follows from \eqref{L2} and \eqref{L3} that 
\begin{equation*}
|c-x_3-\rho |,|c-x_3 -\rho ' |<\delta'
\end{equation*}
and thus that $|\rho -\rho ' |<2\delta'$. Similarly, $|\tilde\rho-\tilde\rho ' |<2\delta'$. Thus 
there are fixed $\rho, \tilde\rho$ such that 
if $y' \in (x+\Sigma_{I,\delta'})\cap \Sigma_{J,\delta'}$ then there are 
$t'\in I,\, \tilde t '\in J$ such that 
\begin{equation}\label{L4}
|y'-x-\sigma (t',\rho)|,\, |y'-\sigma (\tilde t ',\tilde\rho )|<C\delta' .
\end{equation}
where $C$ denotes, as it always will in this proof, a positive constant depending only on $m$ and $M$. It follows from \eqref{L4} that 
\begin{equation}\label{L5}
|(y_1 ',y_2 ' )-(x_1 ,x_2 )-\big(\rho\, t' ,\rho\, \phi (t' )\big)|,\,
|(y_1 ',y_2 ' )-\big(\tilde\rho\, \tilde t ' ,\tilde\rho\, \phi (\tilde t ' )\big)|<C \delta' .
\end{equation}
Define 
\begin{multline*}
\phi_1 (u)=x_2 +\rho\, \phi \big( (u-x_1 )/\rho\big), \phi_2 (u)=\tilde\rho\, \phi (u/\tilde\rho ), 
\\
[c_1 ,d_1 ]=x_1 +\rho I, [c_2 ,d_2 ]=\tilde\rho J, [c,d]=[c_1 ,d_1 ]\cap [c_2 ,d_2 ].
\end{multline*}
Then \eqref{L5} implies that 
%
\begin{multline}\label{L6}
|(y_1' ,y_2 ')-\big(u_1 ,\phi_1 (u_1 )\big)|,\,
|(y_1' ,y_2 ')-\big(u_2 ,\phi_2 (u_2 )\big)|<C\delta' \\
\text{ for some }
u_1 \in [c_1 ,d_1 ],u_2 \in [c_2 ,d_2 ].
\end{multline}
Recall that our goal is to show that the set of all $(y_1' ,y_2 ')$ for which \eqref{L6} holds has two-dimensional Lebesgue measure $\le C\delta'^2 /\ell$. In the case $u_1 ,u_2 \in [c,d]$ this follows from Lemma \ref{first} with $a=m' \ell$. 
(The derivative separation requirement in Lemma \ref{first} is a consequence of our hypothesis $|\phi '' |\ge m'$.)
Of the remaining cases, 
$u_1\in [c_1,d_1 ], u_2 \notin [c_1 ,d_1]$ is typical: allowing $C$ to increase from line to line,
$|u_1 -u_2 |<C\delta'$ follows from 
\eqref{L6}. So, since $u_1\in [c_1,d_1 ], u_2 \notin [c_1 ,d_1]$, we have in succession that 
\begin{multline*}
\text{dist}\big(u_2 ,\{c_1 ,d_1 \}\big)\le C\delta' ,\   \ 
\text{dist}\big(y_1 ' ,\{c_1 ,d_1 \}\big)\le C\delta' , \\
\text{ and }\text{dist}\big(y_2 ' ,\{\phi_2 (c_1 ) ,\phi_2 (d_1 )\}\big)\le C\delta' .
\end{multline*}
(Note to the very careful reader: the extension of $\phi$ to the interval $[-20,20]$ guarantees that both $\phi_1$ and $\phi_2$ are defined on an interval which contains both $[c_1 ,d_1 ]$ and $[c_2 ,d_2 ]$.)
Thus the set of all $(y_1' ,y_2 ')$ for which \eqref{L6} holds with 
$u_1\in [c_1,d_1 ], u_2 \notin [c_1 ,d_1]$ has two-dimensional Lebesgue measure 
$\le C\delta'^2 $. This completes the proof of Lemma \ref{second}.

\end{proof}

\end{document}